\newtheorem{df}{Definition}[section]
\newtheorem{prop}[df]{Proposition}
\newtheorem{lem}[df]{Lemma}
\newtheorem{thm}[df]{Theorem}
\def\Z{\mathbbm Z}
\def\Q{\mathbbm Q}
\def\id{\text{id}}
\title{Quotients of the topology of the partition lattice which are not homotopy equivalent to wedges of spheres}
\author{Ralf Donau\\Universit\"at Bremen\\\texttt{ruelle@math.uni-bremen.de}}
\begin{document}

\maketitle

\begin{abstract}
The reader of \cite{donau} might conjecture that $\Delta(\overline{\Pi}_n)/G$ is homotopy equivalent to a wedge of spheres for any $n\geq 3$ and any subgroup $G\subset S_n$. We disprove this by showing that $\Delta(\overline{\Pi}_p)/C_p$ is not homotopy equivalent to a wedge of spheres for any prime number $p\geq 5$.
\end{abstract}
\section{Introduction}
Let $n\geq 3$ and let $\Pi_n$ denote the poset consisting of all partitions of $[n]:=\{1,\dots,n\}$ ordered by refinement, such that the finer partition is the smaller partition. Let $\overline{\Pi}_n$ denote the poset obtained from $\Pi_n$ by removing both the smallest and greatest element, which are $\{\{1\},\dots,\{n\}\}$ and $\{[n]\}$, respectively. We consider $\overline{\Pi}_n$ as a category, which is acyclic, and define $\Delta(\overline{\Pi}_n)$ to the nerve of the acyclic category $\overline{\Pi}_n$, which is a regular trisp, see \cite[Chapter 10]{buch}.

The symmetric group $S_n$ operates on $\Delta(\overline{\Pi}_n)$ in a natural way. We set $S_1\times S_{n-1}:=\{\sigma\in S_n\mid\sigma(1)=1\}$.
\begin{thm}[Donau \cite{donau}]
\label{rdthm}
Let $n\geq3$ and $G\subset S_1\times S_{n-1}$ be an arbitrary subgroup, then the topological space $\Delta(\overline{\Pi}_n)/G$ is homotopy equivalent to a wedge of spheres of dimension $n-3$.
\end{thm}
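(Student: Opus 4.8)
The plan is to prove the statement by equivariant discrete Morse theory on the face poset of $\Delta(\overline{\Pi}_n)$, transported down to the quotient. The first thing I would record is a regularity property of the action: $G$ acts by order-preserving automorphisms of $\overline{\Pi}_n$, and a simplex of $\Delta(\overline{\Pi}_n)$ is a chain $\pi_1<\cdots<\pi_k$, which is \emph{linearly} ordered, so any $g\in G$ fixing such a chain setwise must fix it pointwise. Hence no simplex is folded onto itself, the quotient $\Delta(\overline{\Pi}_n)/G$ is again a regular trisp, and its cells are exactly the $G$-orbits of simplices. In this situation (Freij's equivariant discrete Morse theory) a $G$-invariant acyclic matching on the face poset descends to an acyclic matching on the quotient, whose critical cells in each dimension are precisely the $G$-orbits of the critical cells upstairs. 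So it suffices to build a $G$-invariant acyclic matching on $\Delta(\overline{\Pi}_n)$ whose critical cells all lie in dimension $0$ (exactly one of them) and in the top dimension $n-3$. Then the Morse complex of the quotient has cells only in dimensions $0$ and $n-3$ and none in between; for $n\geq 5$ there are in particular no $1$-cells, so the quotient is simply connected and therefore homotopy equivalent to a wedge of $(n-3)$-spheres, while the cases $n=3,4$ are immediate.

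The matching is where the hypothesis $G\subseteq S_1\times S_{n-1}$ enters decisively. Because every $g\in G$ fixes the element $1$, any rule formulated purely in terms of the block $B_1(\pi)$ containing $1$, together with otherwise symmetric data of a partition, is automatically $G$-invariant. I would exploit the order ideal $S\subseteq\overline{\Pi}_n$ of partitions in which $\{1\}$ is a singleton block: $S$ is $G$-invariant, it is a down-set (refining can only shrink $B_1$), and $\Delta(S)$ is contractible because $S$ has a greatest element, namely $\{\{1\},\{2,\ldots,n\}\}$. The basic move is the \emph{$1$-split} $\pi\mapsto\sigma(\pi)$ that replaces $B_1(\pi)$ by the two blocks $\{1\}$ and $B_1(\pi)\setminus\{1\}$ whenever $|B_1(\pi)|\geq 2$; this is a $G$-equivariant covering relation $\sigma(\pi)<\pi$ landing in $S$. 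Organising the matching by Kozlov's patchwork lemma along the $G$-invariant statistic $|B_1(\cdot)|$, I would then pair each chain according to whether a $1$-split can be prepended to, or removed from, the bottom of the chain, so that every chain meeting $S$ is cancelled. The surviving (critical) chains are the maximal chains avoiding $S$ in the appropriate sense, all of dimension $n-3$, together with a single critical vertex coming from connectedness of the complex.

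The main obstacle is to construct this matching so that it is simultaneously (i) $G$-invariant, (ii) acyclic, and (iii) supported on critical cells only in dimensions $0$ and $n-3$. Constraint (i) is exactly what rules out the classical route: the standard EL-labelling of $\Pi_n$ and the Babson--Hersh Morse matching derived from it use the full linear order on $[n]$, hence break the $S_{n-1}$-symmetry among $\{2,\ldots,n\}$ and do not descend to the quotient. The content of the proof is to replace these order-dependent choices by choices depending only on the distinguished point $1$ --- in particular, never \emph{merging} $\{1\}$ with some chosen block (which would require an illegitimate choice), but only \emph{splitting} it off --- and then to verify acyclicity via the patchwork lemma plus a poset-theoretic check that the fibrewise matchings contain no directed cycles. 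I expect the delicate bookkeeping to be the proof that exactly the top-dimensional chains survive as critical cells; once that is in hand, counting the $G$-orbits of surviving chains even yields the number of spheres, though the statement only requires this number to be finite.
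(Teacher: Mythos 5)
First, a point of order: the present paper does not prove Theorem \ref{rdthm} at all --- it is quoted, with attribution, from \cite{donau} and used here as a black box --- so there is no internal proof to compare against; the relevant comparison is with the cited paper, which indeed argues along the route you chose: Freij's equivariant discrete Morse theory, with an acyclic matching invariant under all of $S_1\times S_{n-1}$, assembled by Kozlov's patchwork lemma from operations that refer only to the distinguished element $1$. Several of your reductions are sound and worth keeping: comparable partitions have different numbers of blocks, so no group element can fix a chain setwise without fixing it pointwise; consequently the quotient is a regular trisp whose cells are the $G$-orbits of chains, a $G$-invariant acyclic matching descends with critical cells the orbits of the critical cells upstairs, and a CW complex with one $0$-cell and all remaining cells in dimension $n-3$ is a wedge of $(n-3)$-spheres (for $n\geq 5$ its $(n-4)$-skeleton is a point; $n=3,4$ are easy, as you say).

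The genuine gap is in the matching itself: as described, its critical cells are not ``one vertex plus cells of dimension $n-3$,'' and the device that would make this true is exactly what you leave unspecified. Concretely, take any chain whose minimal element $\pi_1$ has $\{1,x\}$ as its unique non-singleton block. The $1$-split of $\pi_1$ is the discrete partition, which was removed from $\overline{\Pi}_n$, so no $1$-split can be prepended; and since $\{1\}$ is not a block of $\pi_1$, the element $\pi_1$ is not itself a $1$-split, so nothing can be removed either. Hence every such chain is critical for your rule, and these occur in \emph{every} dimension $0,1,\dots,n-3$: already in dimension $0$ you get $n-1$ critical vertices, one for each $x\in\{2,\dots,n\}$, rather than a single one. (Chains inside $S$ whose bottom element is not the $1$-split of the next element are likewise left unmatched unless you replace your rule on $\Delta(S)$ by the cone matching with apex $\{\{1\},\{2,\dots,n\}\}$.) What would repair this is a recursion: the critical chains with minimum the atom having block $\{1,x\}$ consist of that atom together with a chain in its upper interval, which is a copy of $\overline{\Pi}_{n-1}$ with new distinguished element $\{1,x\}$; one must repeat the construction inside each such interval (in its reduced form, so that no extra critical vertex survives), transport it equivariantly across the $n-1$ atoms, and invoke the patchwork lemma to certify acyclicity of the glued matching, arriving at one critical vertex and $(n-1)!$ critical $(n-3)$-cells. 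This recursion, its invariance and acyclicity, and the resulting critical-cell count are the actual content of the theorem; without them your text is a correct identification of the mechanism, but not yet a proof.
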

\section{Free actions on trisps}
Let $\Delta$ be a trisp and $G$ a group that acts freely on $\Delta$, then the quotient map $\Delta\longrightarrow\Delta/G$ is a covering space by Proposition 1.40 and Exercise 23 in \cite{hatcher}. Assume $\Delta$ is simply connected, then $\pi_1(\Delta/G)$ is isomorphic to $G$ by Proposition 1.40 in \cite{hatcher}. Furthermore, if $G$ is additionally an abelian group, then $H_1(\Delta/G;\Z)$ is isomorphic to $G$. We obtain the following results:
\begin{prop}
Let $\Delta$ be a simply connected trisp and $G$ a finite group that acts freely on $\Delta$, then $\Delta/G$ is not homotopy equivalent to a wedge of spheres.
\end{prop}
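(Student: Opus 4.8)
The plan is to obtain a contradiction from the fundamental group alone. By the remarks preceding the statement, the hypotheses that $\Delta$ is simply connected and that $G$ acts freely give $\pi_1(\Delta/G)\cong G$, so the quotient has finite fundamental group; here I take $G$ to be nontrivial, which is the case of interest (for $G=1$ there is nothing to prove). Against this I would set the elementary fact that a connected wedge of spheres has free fundamental group, together with the homotopy invariance of $\pi_1$. Since a nontrivial finite group is never free, this will rule out $\Delta/G$ being homotopy equivalent to such a wedge.

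First I would record that $\Delta/G$ is path-connected: $\Delta$ is simply connected, hence connected, and a quotient of a connected space is connected. Consequently, if $\Delta/G$ were homotopy equivalent to a wedge $W=\bigvee_i S^{n_i}$, then $W$ would be connected as well. A wedge of spheres containing a factor $S^0$ is disconnected, so in a connected wedge all of the $n_i$ must be at least $1$; this lets me dispose of the $0$-dimensional case, which would otherwise spoil the fundamental-group computation.

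Next I would compute $\pi_1(W)$. By van Kampen's theorem the fundamental group of a wedge is the free product of the fundamental groups of the factors; since $\pi_1(S^1)\cong\Z$ and $\pi_1(S^{n_i})$ is trivial for $n_i\geq 2$, the group $\pi_1(W)$ is free, of rank equal to the number of $1$-spheres. A homotopy equivalence $\Delta/G\simeq W$ would then force $G\cong\pi_1(\Delta/G)\cong\pi_1(W)$ to be free. Finally I would invoke that a nontrivial free group is infinite, so that a finite nontrivial group cannot be free; as $G$ is finite and nontrivial, this contradiction finishes the proof.

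I do not expect a serious obstacle. The entire content is the observation that the fundamental group of a connected wedge of spheres is free, combined with $\pi_1(\Delta/G)\cong G$ supplied by the covering-space setup; the only points requiring genuine care are ruling out $0$-sphere factors via connectedness and making explicit that $G\neq 1$.
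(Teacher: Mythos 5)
Your proof is correct and is essentially the paper's own (implicit) argument: the paper deduces this proposition directly from the covering-space fact $\pi_1(\Delta/G)\cong G$ stated just before it, the point being that a connected wedge of spheres has free fundamental group while a nontrivial finite group is never free. Your additional care about $0$-sphere factors and the case $G=1$ (where the statement genuinely needs $G$ nontrivial, since a simply connected trisp can itself be a sphere) only makes explicit what the paper leaves unsaid.
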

\begin{lem}
\label{FreeAct}
Let $\Delta$ be a simply connected trisp and $G$ an abelian group that acts freely on $\Delta$, then $H_1(\Delta/G;\Z)$ is isomorphic to $G$.
\end{lem}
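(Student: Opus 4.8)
The plan is to combine the covering-space description recorded just above with the Hurewicz theorem. Since $G$ acts freely on the trisp $\Delta$, the quotient map $p\colon\Delta\longrightarrow\Delta/G$ is a covering, and because $\Delta$ is simply connected it is in fact the universal cover of $\Delta/G$; in particular $\Delta/G$ is path-connected, being a continuous image of the path-connected space $\Delta$.

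First I would use the fact already cited from \cite{hatcher}: for a universal covering the group of deck transformations is naturally isomorphic to the fundamental group of the base. Here the deck transformations are precisely the elements of $G$, so $\pi_1(\Delta/G)\cong G$. Next I would apply the low-dimensional Hurewicz theorem, which for a path-connected space $X$ identifies $H_1(X;\Z)$ with the abelianization $\pi_1(X)^{\mathrm{ab}}$; this applies to $X=\Delta/G$ since the geometric realization of a trisp is a CW complex. Because $G$ is abelian we have $\pi_1(\Delta/G)^{\mathrm{ab}}\cong G^{\mathrm{ab}}=G$, and composing the two isomorphisms yields $H_1(\Delta/G;\Z)\cong G$, as desired.

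The step I expect to require the most care is the identification $\pi_1(\Delta/G)\cong G$: one must check that the free action on the combinatorial trisp induces a free, properly discontinuous action on the geometric realization, so that $p$ is genuinely a covering map to which Hatcher's Proposition 1.40 applies, and that connectivity of $\Delta$ (subsumed by simple connectivity) guarantees $\Delta/G$ is connected. Once this covering-space framework is secured, the passage from $\pi_1$ to $H_1$ via Hurewicz and abelianization is routine, and the abelianness of $G$ makes that final step transparent.
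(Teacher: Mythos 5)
Your proposal is correct and matches the paper's own justification, which appears in the paragraph preceding the lemma: the quotient map is a covering space (Hatcher, Proposition 1.40 and Exercise 23), simple connectivity gives $\pi_1(\Delta/G)\cong G$, and abelianness of $G$ then yields $H_1(\Delta/G;\Z)\cong G$ via abelianization of $\pi_1$. Your write-up merely makes explicit the Hurewicz step and the properly-discontinuous verification that the paper leaves implicit.
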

\begin{df}
\emph{Euler characteristic}
\[
\chi(\Delta):=\sum_{n\geq 0}(-1)^n|S_n(\Delta)|
\]
$S_n(\Delta)$ denotes the set of simplices of dimension $n$.
\end{df}
\begin{df}
\emph{Betti number}
\[
\beta_n^F(\Delta):=\dim H_n(\Delta;F)
\]
\end{df}
\begin{thm}
\label{euler}
Let $\Delta$ be a trisp which has finitely many simplices. Then we have
\[
\chi(\Delta)=\sum_{n\geq 0}(-1)^n\beta_n^F(\Delta)
\]
for any field $F$.
\end{thm}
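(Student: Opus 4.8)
The plan is to reduce the statement to the classical Euler--Poincar\'e relation for a finite chain complex of vector spaces. First I would fix the field $F$ and form the simplicial chain complex $(C_\bullet,\partial_\bullet)$ of the trisp $\Delta$ with coefficients in $F$: for each $n$ the space $C_n = C_n(\Delta;F)$ is the $F$-vector space freely generated by the $n$-simplices, so that $\dim_F C_n = |S_n(\Delta)|$. Since $\Delta$ has only finitely many simplices, each $C_n$ is finite-dimensional and $C_n = 0$ for all but finitely many $n$; in particular the sum defining $\chi(\Delta)$ is finite and equals $\sum_{n\geq 0}(-1)^n\dim_F C_n$. The boundary maps satisfy $\partial_{n-1}\circ\partial_n = 0$, and I would write $Z_n := \ker\partial_n$, $B_n := \operatorname{im}\partial_{n+1}$, and $H_n(\Delta;F) = Z_n/B_n$, so that $\beta_n^F(\Delta) = \dim_F H_n(\Delta;F) = \dim_F Z_n - \dim_F B_n$, the last equality holding precisely because we work over a field.

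Next I would apply the rank--nullity theorem to each boundary map $\partial_n\colon C_n \to C_{n-1}$. This gives $\dim_F C_n = \dim_F\ker\partial_n + \dim_F\operatorname{im}\partial_n = \dim_F Z_n + \dim_F B_{n-1}$, using that $\operatorname{im}\partial_n = B_{n-1}$. Substituting this into the alternating sum yields
\[
\chi(\Delta) = \sum_{n\geq 0}(-1)^n\bigl(\dim_F Z_n + \dim_F B_{n-1}\bigr).
\]
The final step is a bookkeeping manipulation: after reindexing the boundary contribution by $m = n-1$ (legitimate because only finitely many terms are nonzero and $B_{-1}=0$), the sum $\sum_n(-1)^n\dim_F B_{n-1}$ becomes $-\sum_m(-1)^m\dim_F B_m$, so the two sums combine into $\sum_n(-1)^n(\dim_F Z_n - \dim_F B_n) = \sum_n(-1)^n\beta_n^F(\Delta)$.

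I do not expect a serious obstacle here, as this is the classical Euler--Poincar\'e formula; the one point that genuinely requires the stated hypotheses is that the Betti numbers are computed as differences $\dim_F Z_n - \dim_F B_n$, which can fail over a general ring but holds over the field $F$, together with the finiteness of the simplex count, which makes every sum finite and the reindexing valid. The only care needed is to track the index shift in the boundary terms correctly so that the alternating signs line up.
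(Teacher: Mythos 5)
Your proposal is correct: the paper itself offers no proof of this theorem, deferring instead to Kozlov's book (Chapter 3 of \cite{buch}), and the argument you give --- rank--nullity applied to each boundary map of the simplicial chain complex over $F$, followed by the telescoping/reindexing of the alternating sum --- is precisely the classical Euler--Poincar\'e argument that such references contain, with the two hypotheses (field coefficients, finitely many simplices) used exactly where they are needed. The only point worth making explicit is that for a trisp the chain complex freely generated by the simplices does compute $H_n(\Delta;F)$; this is the standard identification of simplicial with singular homology for Delta-complexes, and once it is invoked your proof is complete.
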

The proof can be found in \cite[Chapter 3]{buch}.
\begin{lem}
\label{Bettis}
Let $\Delta$ be a trisp that is homotopy equivalent to a wedge of $k$ spheres of dimension $d>0$. Let $G$ be a finite group that acts freely on $\Delta$. Then
\[
H_i(\Delta/G;\Q)\cong
\begin{cases}
\Q&\text{for $i=0$}\\
\Q^{\frac {k+1}{|G|}-1}&\text{for $i=d$ and $d$ even}\\
\Q^{\frac {k-1}{|G|}+1}&\text{for $i=d$ and $d$ odd}\\
0&\text{else}
\end{cases}
\]
\end{lem}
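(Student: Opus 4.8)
The plan is to reduce the computation to the rational homology of $\Delta$ by means of the transfer homomorphism, and then to fix the one remaining Betti number using the Euler characteristic. Since $G$ acts freely, the quotient map $p\colon\Delta\longrightarrow\Delta/G$ is a finite regular covering with deck group $G$, as recalled at the start of this section. Working over $\Q$, where $|G|$ is invertible, the transfer homomorphism for finite coverings \cite{hatcher} supplies a map $\tau_*\colon H_i(\Delta/G;\Q)\to H_i(\Delta;\Q)$ satisfying $p_*\tau_*=|G|\cdot\id$ and $\tau_*p_*=\sum_{g\in G}g_*$; combining these two relations yields an isomorphism $H_i(\Delta/G;\Q)\cong H_i(\Delta;\Q)^G$ onto the subspace of $G$-invariants. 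This reduction is the backbone of the argument.

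From the hypothesis $\Delta\simeq\bigvee_{j=1}^k S^d$ I would read off $H_0(\Delta;\Q)\cong\Q$, $H_d(\Delta;\Q)\cong\Q^k$, and $H_i(\Delta;\Q)=0$ otherwise. As $\Delta$ is connected, $G$ acts trivially on $H_0$, so $H_0(\Delta/G;\Q)\cong\Q$; and for $i\neq 0,d$ the invariants of the zero space vanish, giving $H_i(\Delta/G;\Q)=0$. Hence every case except $i=d$ is settled at once. For $i=d$ one would like to identify the $G$-representation $H_d(\Delta;\Q)\cong\Q^k$ and then compute its invariants, but here lies the main obstacle: the homotopy equivalence with a wedge of spheres need not be $G$-equivariant, so this representation is not visible directly. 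I would therefore circumvent it rather than determine it.

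To finish, I would invoke the Euler characteristic. By the preceding paragraph the rational homology of $\Delta/G$ is concentrated in degrees $0$ and $d$, so Theorem \ref{euler} gives $\chi(\Delta/G)=1+(-1)^d\dim H_d(\Delta/G;\Q)$. On the other hand the covering is $|G|$-sheeted and $G$ permutes the simplices in each dimension freely, so every orbit consists of exactly $|G|$ simplices and $\chi(\Delta)=|G|\,\chi(\Delta/G)$; with $\chi(\Delta)=1+(-1)^d k$ this gives $\chi(\Delta/G)=(1+(-1)^d k)/|G|$. Equating the two expressions for $\chi(\Delta/G)$ and solving for $\dim H_d(\Delta/G;\Q)$ produces $\frac{k+1}{|G|}-1$ when $d$ is even and $\frac{k-1}{|G|}+1$ when $d$ is odd, which is exactly the assertion. (One could instead evaluate $\dim H_d(\Delta;\Q)^G=\frac1{|G|}\sum_{g\in G}\operatorname{tr}(g_*\mid H_d)$ directly: every $g\neq e$ is fixed-point-free, so its Lefschetz number vanishes, whereas $L(e)=\chi(\Delta)$, and since only $H_0$ and $H_d$ contribute this determines each trace and returns the same formulas.)
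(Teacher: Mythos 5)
Your proposal is correct and follows essentially the same route as the paper's own proof: use the transfer theorem over $\Q$ to kill $H_i(\Delta/G;\Q)$ for $i\neq 0,d$, then combine $\chi(\Delta)=|G|\,\chi(\Delta/G)$ with Theorem \ref{euler} to solve for $\beta_d^\Q(\Delta/G)$. The only additions on your side (the explicit identification with $G$-invariants and the parenthetical Lefschetz-trace alternative) are harmless refinements of the same argument.
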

\begin{proof}
We have $H_i(\Delta;\Q)\cong 0$ for $i\not=0$ and $i\not=d$. By applying the Transfer Theorem\footnote{See \cite{transfer}} we obtain $H_i(\Delta/G;\Q)\cong 0$ for $i\not=0$ and $i\not=d$. By Theorem \ref{euler} we have
\[\chi(\Delta)=\sum_{n\geq 0}(-1)^n\beta_n^\Q(\Delta)=1+(-1)^dk\]
which implies $\chi(\Delta/G)=\frac{1+(-1)^dk}{|G|}$, since $G$ acts freely. We apply Theorem \ref{euler} again and obtain $\chi(\Delta/G)=1+(-1)^d\beta_d^\Q(\Delta/G)$. Hence $\beta_d^\Q(\Delta/G)=(-1)^d(\frac{1+(-1)^dk}{|G|}-1)=\frac{(-1)^d+k}{|G|}-(-1)^d$.
\end{proof}
\section{A prime period action on the reduced subset lattice}
Now let $p\geq 5$ be a prime number. We consider the subgroup of $S_p$ that is generated by a cycle of length $p$, which we denote by $C_p$. Set
\[L_p:={\cal P}([p])\setminus\{\emptyset,[p]\}\]
Here, ${\cal P}([p])$ denotes the set of subsets of $[p]$. $S_p$ acts on $L_p$ in a natural way.
\begin{lem}
\label{FreeZPS}
Let $p>0$ be a prime number, then $C_p$ acts freely on $L_p$. In particular $C_p$ acts freely on $\Delta(L_p)$.
\end{lem}
\begin{proof}
We have to show $gv=v$ implies $g=\id$ for all $v\in L_p$ and $g\in C_p$. Let $g\in C_p$ with $g\not=\id$, then $g$ generates $C_p$. Let $v\in L_p$ and assume $gv=v$. Since $C_p$ acts transitively on the set $[p]$, $v\not=\emptyset$ implies $v=[p]$, which is impossible.
\end{proof}
\begin{prop}
Let $p\geq 5$ be a prime number, then
\begin{eqnarray}
H_1(\Delta(L_p)/C_p;\Z)&\cong&\Z_p\\
H_{p-2}(\Delta(L_p)/C_p;\Z)&\cong&\Z
\end{eqnarray}
\end{prop}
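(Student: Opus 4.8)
The plan is to first pin down the homeomorphism type of $\Delta(L_p)$ and then let the machinery of the previous two sections do the rest. The key observation is that $L_p$ is precisely the proper part of the Boolean lattice $\mathcal P([p])$, and that its order complex $\Delta(L_p)$ is nothing but the barycentric subdivision of the boundary $\partial\Delta^{p-1}$ of the standard $(p-1)$-simplex: the vertices of $\Delta(L_p)$ are the proper nonempty subsets of $[p]$, which are exactly the faces of $\partial\Delta^{p-1}$, and the simplices of $\Delta(L_p)$ are the chains of such subsets. Since a complex and its barycentric subdivision have homeomorphic geometric realisations, I would conclude $\Delta(L_p)\cong S^{p-2}$. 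Two consequences I would record immediately: $\Delta(L_p)$ is simply connected because $p-2\geq 3$, and it is (trivially) homotopy equivalent to a wedge of $k=1$ sphere of dimension $d=p-2$. I would also note that the longest chain in $L_p$ has $p-1$ elements, so $\dim\Delta(L_p)=p-2$.

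For the first homology I would invoke the results already assembled. By Lemma \ref{FreeZPS} the abelian group $C_p\cong\Z_p$ acts freely on the simply connected trisp $\Delta(L_p)$, so Lemma \ref{FreeAct} applies verbatim and gives $H_1(\Delta(L_p)/C_p;\Z)\cong C_p\cong\Z_p$, which is the first assertion.

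For the top-dimensional homology I would start over $\Q$. Applying Lemma \ref{Bettis} with $k=1$, $|G|=p$, and $d=p-2$ (odd, since $p$ is an odd prime) yields $\beta_{p-2}^\Q(\Delta(L_p)/C_p)=\tfrac{k-1}{|G|}+1=1$. The remaining task is to promote this rational computation to an integral isomorphism. Writing $X:=\Delta(L_p)/C_p$, the quotient is again a trisp of dimension $p-2$, so it has no simplices in dimension $p-1$; hence $H_{p-2}(X;\Z)$ is a subgroup of the finitely generated free abelian chain group $C_{p-2}(X)$ and is in particular finitely generated and torsion-free. By the universal coefficient theorem its rank equals $\beta_{p-2}^\Q(X)=1$, so a finitely generated torsion-free abelian group of rank $1$ must be $\Z$, giving the second assertion.

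I expect the genuinely substantive step to be the identification $\Delta(L_p)\cong S^{p-2}$; once that is in place, both homology groups follow by direct appeal to Lemma \ref{FreeAct} and Lemma \ref{Bettis}. As a sanity check and possible alternative for the top group, I would keep in mind that $X$ is the quotient of the sphere $S^{p-2}$ by a free $C_p$-action and hence a closed $(p-2)$-manifold; since $p$ is odd, any deck transformation $g$ satisfies $(\deg g)^p=\deg(g^p)=1$ and therefore $\deg g=1$, so the action is orientation-preserving, $X$ is orientable, and $H_{p-2}(X;\Z)\cong\Z$ by Poincar\'e duality. This gives an independent confirmation of the second isomorphism.
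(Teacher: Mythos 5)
Your proof is correct and follows the same skeleton as the paper's: Lemma \ref{FreeZPS} together with Lemma \ref{FreeAct} for the first homology group, and Lemma \ref{Bettis} with $k=1$, $d=p-2$ odd, $|G|=p$ for the top one. Two differences are worth recording. First, where the paper only asserts that the simplicial complex $L_p$ is \emph{homotopy equivalent} to a $(p-2)$-sphere (``can be verified via Discrete Morse Theory''), you identify $\Delta(L_p)$ outright as the barycentric subdivision of $\partial\Delta^{p-1}$ and hence as \emph{homeomorphic} to $S^{p-2}$; this is more elementary and gives the stronger statement, which your manifold argument then exploits. Second, and more substantively: the paper's own proof stops at $H_{p-2}(\Delta(L_p)/C_p;\Q)\cong\Q$, i.e.\ it establishes the rational Betti number although the proposition claims the integral group $\Z$. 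Your promotion step --- the quotient is a trisp of dimension $p-2$, so its top homology is the kernel of $\partial_{p-2}$ inside the finitely generated free abelian chain group, hence free abelian, and of rank $1$ by the rational computation --- supplies exactly the argument the paper omits. Your Poincar\'e duality check (the quotient of $S^{p-2}$ by a free $C_p$-action is a closed manifold, orientable because $(\deg g)^p=\deg(g^p)=1$ forces $\deg g=1$ for odd $p$) is an independent and equally valid route to the same integral statement. So your write-up is, if anything, more complete than the paper's own proof of this proposition.
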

\begin{proof}
$C_p$ acts freely on $\Delta(L_p)$ by Lemma \ref{FreeZPS}. By applying Lemma \ref{FreeAct} we obtain $H_1(\Delta(L_p)/C_p;\Z)\cong\Z_p$, since $\Delta(L_p)$ is simply connected. The abstract simplicial complex $L_p$, where the vertices are the singleton sets, is homotopy equivalent to a sphere of dimension $p-2$. This can be verified via Discrete Morse Theory for example. Since $\Delta(L_p)$ is the barycentric subdivision of $L_p$, $\Delta(L_p)$ is also homotopy equivalent to a sphere of dimension $p-2$. Via Lemma \ref{Bettis} we obtain $H_{p-2}(\Delta(L_p)/C_p;\Q)\cong\Q$.
\end{proof}
\section{A prime period action on the reduced partition lattice}
Let $n\geq 3$ be a fixed natural number. The symmetric group $S_n$ operates on $\Delta(\overline{\Pi}_n)$ in a natural way. Let $p\geq 5$ be a prime number, then $\Delta(\overline{\Pi}_p)$ is homotopy equivalent to a wedge of $(p-1)!$ spheres of dimension $(p-3)$ by Theorem \ref{rdthm}. We consider the quotient $\Delta(\overline{\Pi}_p)/C_p$.
\begin{lem}
\label{FreeZPP}
Let $p\geq 3$ be a prime number, then $C_p$ acts freely on $\Delta(\overline{\Pi}_p)$.
\end{lem}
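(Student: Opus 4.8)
The plan is to reduce the statement to a purely combinatorial fact about partitions, exactly along the lines of the proof of Lemma \ref{FreeZPS}. First I would observe that $C_p$ acts on the acyclic category $\overline{\Pi}_p$ by order-preserving bijections, and that a simplex of the nerve $\Delta(\overline{\Pi}_p)$ is a chain $\pi_0<\pi_1<\dots<\pi_k$ in $\overline{\Pi}_p$. If some $g\in C_p$ fixes such a simplex setwise, then $g$ maps the chain to itself while preserving the order; since an order-preserving bijection of a finite totally ordered set is the identity, $g$ must fix every vertex $\pi_i$. Hence a nontrivial element fixing a simplex would already fix a single partition $\pi\in\overline{\Pi}_p$, and it suffices to show that no nontrivial $g\in C_p$ fixes any element of $\overline{\Pi}_p$. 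This is precisely the reduction encapsulated by the phrase ``In particular'' in Lemma \ref{FreeZPS}.

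For the combinatorial core, let $g\in C_p$ with $g\neq\id$ and suppose $g\pi=\pi$ for some partition $\pi$ of $[p]$. Since $p$ is prime, $g$ generates $C_p$, so $\pi$ is invariant under all of $C_p$; that is, the blocks of $\pi$ form a $C_p$-invariant system of blocks for the transitive action of $C_p$ on $[p]$. Because the action is transitive, all blocks of such a system have a common cardinality $d$, and $d$ divides $|[p]|=p$. As $p$ is prime, either $d=1$, forcing $\pi$ to be the partition into singletons, or $d=p$, forcing $\pi=\{[p]\}$. These are exactly the two elements removed when passing from $\Pi_p$ to $\overline{\Pi}_p$, so $\pi\notin\overline{\Pi}_p$, a contradiction. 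Combined with the reduction above, this shows that $C_p$ acts freely on $\Delta(\overline{\Pi}_p)$.

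I expect the only step requiring genuine care to be the reduction in the first paragraph: one has to argue that fixing a simplex \emph{setwise} already forces fixing it \emph{pointwise}, which hinges on the action being order-preserving along the chain. Once this is in place the combinatorial heart is short, the essential point being that transitivity of $C_p$ together with the primality of $p$ leaves no room for a block system other than the two trivial ones. A more hands-on way to run the second paragraph is to examine the orbits of $g$ on the blocks of $\pi$: each such orbit has size $1$ or $p$; an orbit of size $p$ forces $\pi$ to consist of $p$ singleton blocks, while if every block is individually $g$-invariant then each block is a union of orbits of the $p$-cycle $g$ on $[p]$, hence either empty or all of $[p]$. Either way one is driven back to the two excluded partitions, giving the same contradiction.
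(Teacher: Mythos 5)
Your proposal is correct, and its skeleton coincides with the paper's: reduce freeness on $\Delta(\overline{\Pi}_p)$ to freeness on the vertex set $\overline{\Pi}_p$ (you justify this reduction via order-preserving bijections of chains; the paper asserts it in one sentence and leaves the justification implicit), then show that a partition fixed by a nontrivial element of $C_p$ must be one of the two partitions removed from $\Pi_p$. Where you genuinely differ is in the combinatorial core. The paper observes that a fixed partition $v$ is invariant under all of $C_p$, so $C_p$ permutes its blocks; since each block is a nonempty proper subset of $[p]$, i.e.\ an element of $L_p$, Lemma \ref{FreeZPS} makes this action on blocks \emph{free}, so every block orbit has size $p$ and $v$ has at least $p$ blocks --- impossible in $\overline{\Pi}_p$. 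You instead quote the classical permutation-group fact that a partition invariant under a transitive action is a block system whose blocks share a common cardinality $d$ dividing $p$, whence $d\in\{1,p\}$ and the fixed partition is one of the two excluded ones. Your route is self-contained (it never needs Lemma \ref{FreeZPS}, nor the observation that blocks lie in $L_p$), at the cost of invoking a standard fact from outside the paper; the paper's route buys internal economy, reusing Lemma \ref{FreeZPS} and keeping the two freeness lemmas structurally parallel. Your closing ``hands-on'' variant --- orbits of $g$ on blocks have size $1$ or $p$, a size-$p$ orbit forces the singleton partition, and all blocks being $g$-invariant forces $\{[p]\}$ --- is essentially the paper's argument with the appeal to Lemma \ref{FreeZPS} replaced by the direct remark that a $g$-invariant block is a union of orbits of the $p$-cycle $g$, hence all of $[p]$.
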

\begin{proof}
It suffices to show that $C_p$ acts freely on the set of vertices of $\Delta(\overline{\Pi}_p)$. We have to show $gv=v$ implies $g=\id$ for all $v\in\overline{\Pi}_p$ and $g\in C_p$. Assume we have $gv=v$ with $g\not=\id$. Since $g$ generates $C_p$, we have $gv=v$ for all $g\in\Z_p$. In particular, $C_p$ acts on the blocks of $v$ and this action is free by Lemma \ref{FreeZPS}. Hence we have at least $p$ blocks, which is impossible.
\end{proof}
\begin{prop}
Let $p\geq 5$ be a prime number, then
\begin{eqnarray}
H_1(\Delta(\overline{\Pi}_p)/C_p;\Z)&\cong&\Z_p\\
H_{p-3}(\Delta(\overline{\Pi}_p)/C_p;\Z)&\cong&\Z^{\frac{(p-1)!-(p-1)}p}
\end{eqnarray}
In particular $\Delta(\overline{\Pi}_p)/C_p$ is not homotopy equivalent to a wedge of spheres.
\end{prop}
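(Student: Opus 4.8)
The plan is to combine the two general tools already established---Lemma \ref{FreeAct} for the first homology group and Lemma \ref{Bettis} for the rational Betti numbers---with a dimension argument that upgrades the rational computation of the top homology to an integral statement. Since $p\geq 5$ the sphere dimension $d:=p-3$ satisfies $d\geq 2$, so the wedge of $(p-1)!$ spheres to which $\Delta(\overline{\Pi}_p)$ is homotopy equivalent (Theorem \ref{rdthm}) is simply connected. By Lemma \ref{FreeZPP} the abelian group $C_p$ acts freely, so Lemma \ref{FreeAct} yields $H_1(\Delta(\overline{\Pi}_p)/C_p;\Z)\cong C_p\cong\Z_p$ at once, which settles the first isomorphism.

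For the top homology I would first pin down the rational Betti number. Because $p$ is odd, $d=p-3$ is even, so the even case of Lemma \ref{Bettis} with $k=(p-1)!$ and $|G|=p$ gives $\beta_d^\Q(\Delta(\overline{\Pi}_p)/C_p)=\frac{(p-1)!+1}{p}-1=\frac{(p-1)!-(p-1)}{p}$, while all other reduced rational Betti numbers vanish. Wilson's theorem, which says $(p-1)!\equiv-1\pmod p$, guarantees that this number is a nonnegative integer.

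The key step is to show that this rank is realized by a free abelian integral homology group, i.e.\ that $H_{p-3}$ carries no torsion. Here I would use that maximal chains in $\overline{\Pi}_p$ have exactly $p-2$ elements, so $\Delta(\overline{\Pi}_p)$---and hence its quotient by the free $C_p$-action---is a trisp of dimension exactly $p-3$. Consequently $H_{p-3}(\Delta(\overline{\Pi}_p)/C_p;\Z)$ is the kernel of the top boundary map $\partial_{p-3}\colon C_{p-3}\to C_{p-4}$ of the cellular chain complex, hence a subgroup of a finitely generated free abelian group, and therefore free abelian itself. Its rank equals $\beta_{p-3}^\Q$ by the universal coefficient theorem, so $H_{p-3}(\Delta(\overline{\Pi}_p)/C_p;\Z)\cong\Z^{\frac{(p-1)!-(p-1)}{p}}$. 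I expect this torsion-freeness, rather than the numerics, to be the main point to get right, since it is precisely what lets the rational count determine the integral group.

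Finally, the ``in particular'' follows immediately: the homology of a wedge of spheres is free abelian in every degree, whereas $H_1(\Delta(\overline{\Pi}_p)/C_p;\Z)\cong\Z_p$ has nonzero torsion, so $\Delta(\overline{\Pi}_p)/C_p$ cannot be homotopy equivalent to a wedge of spheres. Alternatively, this is a direct instance of the Proposition in Section 2, since $\Delta(\overline{\Pi}_p)$ is simply connected and $C_p$ is a finite group acting freely.
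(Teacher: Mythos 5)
Your proposal is correct and follows the same main route as the paper: Lemma \ref{FreeZPP} together with Lemma \ref{FreeAct} gives $H_1\cong\Z_p$, and Lemma \ref{Bettis} (even case, with $k=(p-1)!$, $|G|=p$, $d=p-3$) gives the top Betti number $\frac{(p-1)!+1}p-1=\frac{(p-1)!-(p-1)}p$. The one genuine difference is your explicit upgrade from rational to integral coefficients: the paper's own proof stops at $H_{p-3}(\Delta(\overline{\Pi}_p)/C_p;\Q)\cong\Q^{\frac{(p-1)!-(p-1)}p}$, even though the proposition asserts the integral group $\Z^{\frac{(p-1)!-(p-1)}p}$. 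Your argument---maximal chains in $\overline{\Pi}_p$ have $p-2$ elements, so $\Delta(\overline{\Pi}_p)$ and its quotient by the free $C_p$-action are complexes of dimension exactly $p-3$, whence $H_{p-3}$ is the kernel of the top boundary map, a subgroup of a finitely generated free abelian group, and therefore free abelian of rank $\beta_{p-3}^\Q$ by universal coefficients---is precisely the justification the paper leaves implicit, so your write-up is more complete on this point. Likewise, your two justifications of the ``in particular'' clause (torsion in $H_1$ versus free homology of any wedge of spheres, or alternatively the Proposition of Section 2) are both valid; the paper states this conclusion without comment. The appeal to Wilson's theorem to check that the exponent is an integer is a pleasant observation, though not logically necessary, since the lemma already forces the Betti number to be a nonnegative integer.
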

\begin{proof}
$C_p$ acts freely on $\Delta(\overline{\Pi}_p)$ by Lemma \ref{FreeZPP}. By applying Lemma \ref{FreeAct} we obtain $H_1(\Delta(\overline{\Pi}_p)/C_p;\Z)\cong\Z_p$, since $\Delta(\overline{\Pi}_p)$ is simply connected. Via Lemma \ref{Bettis} we obtain $H_{p-3}(\Delta(\overline{\Pi}_p)/C_p;\Q)\cong\Q^{\frac{(p-1)!+1}p-1}\cong\Q^{\frac{(p-1)!-(p-1)}p}$.
\end{proof}
\section*{Acknowledgements}
The author would like to thank Dmitry N. Kozlov for the valuable suggestions that led to improvements of this paper.


\begin{thebibliography}{00}
\bibitem{transfer}
G.E. Bredon, \textit{Introduction to compact transformation groups}, Pure and applied mathematics \textbf{46}, Chapter III, Academic Press, 1972.
\bibitem{donau}
R. Donau, \textit{Quotients of the order complex $\Delta(\overline{\Pi}_n)$ by subgroups of the Young subgroup $S_1\times S_{n-1}$}, Topology and its Applications \textbf{157} (16) (2010), pp. 2476-2479.
\bibitem{hatcher}
A. Hatcher, \textit{Algebraic Topology}, Section 1.3, Cambridge University Press, 2008.
\bibitem{buch}
D.N. Kozlov, \textit{Combinatorial Algebraic Topology}, Algorithms and Computation in Mathematics \textbf{21}, Springer-Verlag Berlin Heidelberg, 2008.
\end{thebibliography}
\end{document}